\newtheorem{theorem*}{Theorem}
\newtheorem{theorem}{Theorem}[section]
\newtheorem{lemma}[theorem]{Lemma}
\newtheorem{proposition}[theorem]{Proposition}
\newtheorem{corollary}[theorem]{Corollary}
\theoremstyle{definition}
\newtheorem{definition}[theorem]{Definition}
\numberwithin{equation}{section}
\theoremstyle{plain}
\newtheorem{question}{Question}
\numberwithin{equation}{section}
\newcommand{\Mod}{\text{Mod}}
\begin{document}

\title[Groups generated by Dehn Twists along fillings of surfaces]{Groups generated by Dehn Twists along fillings of surfaces}
\author{Rakesh Kumar}
\address{School of Mathematics and Computer Science\\
Indian Institute of Technology Goa\\
At Goa College of Engineering Campus \\
Farmagudi, Ponda-403401, Goa\\
India}
\email{rakesh20232101@iitgoa.ac.in}

\subjclass[2020]{Primary 57K20, 57M15; Secondary 05C10}

\keywords{surface, filling, Dehn twist, mapping class group}

\maketitle

 \begin{abstract} Let $S_g$ denote a closed oriented surface of genus $g \geq 2$. A set $\Omega = \{ c_1, \dots, c_d\}$ of pairwise non-homotopic simple closed curves on $S_g$ is called a \emph{filling system} or simply a \emph{filling} of $S_g$, if $S_g\setminus \Omega$ is a union of $\ell$ topological discs for some $\ell\geq 1$. For $1\leq i\leq d$, let $T_{c_i}$ denotes the Dehn twist along $c_i$.
In this article, we show that for each $d\geq 2$, there exists a filling $\Omega=\{c_1,c_2,\dots, c_d\}$ of $S_g$ such that the group $\langle T_{c_1}, T_{c_2},\dots,T_{c_d}\rangle$ is isomorphic to the free group of rank $d$. 
\end{abstract}

\section{Introduction}
Let $S_g$ denote a closed oriented surface of genus $g\geq 2$ and $\Omega = \{ c_1, \dots, c_d\}$ is
a nonempty collection of pairwise non-homotopic simple closed curves on $S_g$ such that $i(c_i, c_j )= \lvert c_i \cap c_j \rvert$ for $i \neq j$, i.e., $c_i$ and $c_j$  are in minimal position. Here, $i(c_i, c_j)$ denotes the geometric intersection number of the closed curves $c_i$ and $c_j$. The set $\Omega$ is called a filling of the surface $S_g$ if $S_g \setminus \cup_{i=1}^d c_i$ is a disjoint union of topological discs. The \emph{size} of a filling is defined as the number of its elements. If the number of curves in $\Omega$ is two, then we say that $\Omega$ is a filling pair. For more details on filling of surfaces see   \cite{BS}.\\
Let $\Mod(S_g)$ denote the mapping class group of $S_g$ which is the group of orientation-preserving self-homeomorphisms of $S_g$ up to isotopy. For a simple closed curve $c$ let, $T_c$ denotes the Dehn twist along $c$ on $S_g$. It is well known that $\Mod(S_g)$ is generated by finitely many Dehn twists along non-separating curves \cite{LR,HS}. It is a fact that if $a,b$ are two disjoint essential simple closed curves then $\langle T_a, T_b \rangle$ is isomorphic to $\mathbb{Z} \times \mathbb{Z}$ and if $i(a,b)=1$ then $T_a, T_b$ satisfies the braid relation $T_a T_b T_a =T_b T_a T_b$ \cite{FM}. In this direction when $i(a,b) \geq 2$, Ishida \cite{MR1435728} has proved the following result.
\begin{theorem} \label{T1} 
Let $a$ and $b$ be two isotopy classes of simple closed curves in a surface $S_g$. If $i(a,b) \geq 2$, then the group generated by $T_a$ and $T_b$ is isomorphic to the free group $F_2$ of rank 2.
\end{theorem}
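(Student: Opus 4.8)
The plan is to prove Theorem~\ref{T1} (Ishida's theorem) using the classical \emph{ping-pong lemma}, which is the standard tool for establishing that a group generated by two elements is free of rank two. The core idea is to find an action of the group $\langle T_a, T_b\rangle$ on a suitable space together with two disjoint nonempty subsets $X_a$ and $X_b$ on which the nonzero powers of the two generators behave like ``ping-pong'' players: every nonzero power of $T_a$ maps $X_b$ into $X_a$, and every nonzero power of $T_b$ maps $X_a$ into $X_b$. Once such sets are exhibited, the ping-pong lemma immediately yields that no nontrivial reduced word in $T_a$ and $T_b$ can be the identity, hence $\langle T_a, T_b\rangle \cong F_2$.

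The first step is to choose the right space for the action. Since $i(a,b)\geq 2$, the natural setting is the space of measured foliations (or projective measured laminations), or equivalently the action on the set of isotopy classes of simple closed curves equipped with geometric intersection numbers. I would use the fact that a Dehn twist $T_c$ acts on a curve $\gamma$ by the formula controlling $i(T_c^n(\gamma), \delta)$ in terms of $i(\gamma,c)$, $i(c,\delta)$ and $n$; in particular, for large $|n|$ the image $T_c^n(\gamma)$ is ``pulled toward'' $c$ in the sense that its intersection pattern is dominated by $c$. The key inequality is the Dehn twist estimate
\begin{equation*}
\bigl| i(T_c^n(\gamma),\delta) - |n|\, i(\gamma,c)\, i(c,\delta) \bigr| \leq i(\gamma,\delta),
\end{equation*}
which quantifies how twisting forces intersection numbers to grow linearly in $|n|$ whenever $i(\gamma,c)>0$.

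The second step, which I expect to be the technical heart of the argument, is to define the ping-pong sets explicitly in terms of these intersection numbers and to verify the invariance conditions using the estimate above. A natural choice is to let $X_a$ consist of those (projective classes of) curves or laminations that are ``closer to $a$ than to $b$'' — for instance $X_a = \{ \gamma : i(\gamma,a) > i(\gamma,b)\}$ and symmetrically $X_b = \{\gamma : i(\gamma,b) > i(\gamma,a)\}$ — and then to show, using $i(a,b)\geq 2$, that applying a nonzero power $T_a^n$ to any element of $X_b$ lands in $X_a$. The hypothesis $i(a,b)\geq 2$ is precisely what is needed here: it guarantees that the linear growth term dominates the error term, so that a single twist already shifts the dominant intersection from $b$ to $a$. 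I would carry out this verification first for the generator $T_a$ and then note that the roles of $a$ and $b$ are symmetric.

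The main obstacle will be handling the boundary/threshold behavior carefully: the estimate only gives strict domination when $|n|$ is large enough relative to the starting intersection numbers, so the case $|n|=1$ requires the full strength of $i(a,b)\geq 2$ and a sharp version of the inequality, and one must ensure the ping-pong sets are genuinely disjoint and nonempty (which is where the strict inequalities and the choice of a suitable base curve, such as $a$ itself lying in $X_a$, matter). Once the invariance is established for all nonzero powers uniformly, the ping-pong lemma applies verbatim and the freeness follows; I would close by remarking that this is exactly the mechanism that makes $\langle T_a, T_b\rangle$ free precisely when $i(a,b)\geq 2$, in contrast to the braid relation that appears when $i(a,b)=1$.
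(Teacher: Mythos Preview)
The paper does not actually prove Theorem~\ref{T1}: it is quoted as a known result of Ishida and cited without proof. That said, your strategy---ping-pong on isotopy classes of curves via the Dehn-twist intersection estimate---is precisely Ishida's original argument, so you are on the right track.

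There is, however, a concrete error in your proposed ping-pong sets: the inequalities are reversed. With your choice $X_a = \{\gamma : i(\gamma,a) > i(\gamma,b)\}$, nonzero powers of $T_a$ send $X_b$ \emph{further away} from $X_a$, not into it, because $T_a$ fixes $i(\cdot,a)$ and increases $i(\cdot,b)$. The correct definition is the opposite one,
\[
X_a = \{\gamma : i(\gamma,b) > i(\gamma,a)\},\qquad X_b = \{\gamma : i(\gamma,a) > i(\gamma,b)\},
\]
so that $X_a$ consists of curves that ``look like $a$'' and therefore cross $b$ often. With this correction the verification you outline goes through cleanly: for $\gamma \in X_b$ one has $i(\gamma,a) > i(\gamma,b) \geq 0$, hence $i(\gamma,a) \geq 1$, and the estimate gives for any $n\neq 0$
\[
i(T_a^n(\gamma),b) \;\geq\; |n|\,i(a,b)\,i(\gamma,a) - i(\gamma,b) \;\geq\; 2\,i(\gamma,a) - i(\gamma,b) \;>\; i(\gamma,a) \;=\; i(T_a^n(\gamma),a),
\]
so $T_a^n(X_b)\subseteq X_a$; the case of $T_b$ is symmetric. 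Nonemptiness and disjointness are immediate since $a\in X_a$ and $b\in X_b$ (as $i(a,b)\geq 2 > 0 = i(a,a)$). This is exactly the threshold computation you anticipated, and it shows why $i(a,b)\geq 2$ is the sharp hypothesis.
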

If $\{a,b\}$ is a filling pair on a surface of genus $g\geq 2$ with $\ell$ boundary components, then $i(a,b)=2g-2+\ell \geq 2$ and it follows from  Theorem~\ref{T1} that the group generated by $T_a$ and $T_b$ is isomorphic to the free group of rank 2. One can ask the following question:
\begin{question}
 For $d \geq 3$, does there exist a filling $\Omega = \{ c_1, \dots, c_d\}$ of size $d$ such that the group $\langle T_{c_1}, T_{c_2},\dots,T_{c_d}\rangle$ is isomorphic to the free group of rank $d$.   
\end{question}
We answer this question in the affirmative in the form of the following result:
\begin{theorem} \label{main}
Let $g\geq 2$ and let $S_g$ be a closed oriented surface of genus $g$. For each $d\geq 2$, there exists a filling $\Omega=\{c_1,c_2,\dots, c_d\}$ of $S_g$ such that the group $\langle T_{c_1}, T_{c_2},\dots,T_{c_d}\rangle$ is isomorphic to the free group of rank d.
\end{theorem}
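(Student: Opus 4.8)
The plan is to reduce \thmref{main} to the classical ping-pong (table-tennis) lemma, applied to the action of the $T_{c_i}$ on the space $\mathcal{PML}(S_g)$ of projective measured laminations through geometric intersection numbers. The case $d=2$ is already covered: for $g\ge 2$ a filling pair $\{c_1,c_2\}$ exists with $i(c_1,c_2)=2g-2+\ell\ge 2$, so \thmref{T1} gives $\langle T_{c_1},T_{c_2}\rangle\cong F_2$ directly. For $d\ge 3$ the freeness of the $T_{c_i}$ will not hold for an arbitrary filling, so the whole point is to \emph{build} a filling whose curves are pairwise ``far apart,'' and then run a simultaneous $d$-fold ping-pong.

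First I would construct, for every $g\ge 2$ and every $d$, a filling $\Omega=\{c_1,\dots,c_d\}$ of $S_g$ such that $i(c_i,c_j)\ge N$ for all $i\ne j$, where $N=N(d)$ is a threshold fixed in advance. The filling property is cheap to guarantee: any collection of pairwise non-homotopic curves in minimal position that \emph{contains} a filling subsystem is again a filling, since subdividing the complementary discs by further arcs only produces more discs. Starting from a filling pair $\{a,b\}$, I would therefore enlarge it by well-separated twisted copies such as $T_a^{m_k}(b)$, using the estimate below to see that $i(T_a^{m}(b),T_a^{m'}(b))$ grows linearly in $|m-m'|$; choosing the exponents $m_k$ spread out makes all the new pairwise intersections exceed $N$, and one arranges the same for the remaining pairs by taking the seed curves themselves to be twisted (or by starting from a filling pair of large intersection number).

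The dynamical input is the standard Dehn-twist intersection estimate
\[
\big|\, i(T_c^{\,n}(x),y)-|n|\,i(x,c)\,i(c,y)\,\big|\le i(x,y),
\]
valid for all curves $x,y,c$ and all $n\in\mathbb{Z}$, together with the identity $i(T_{c_i}^{\,n}(x),c_i)=i(x,c_i)$. Reading these projectively, the map $x\mapsto\big[i(x,c_1):\dots:i(x,c_d)\big]$ sends, for $x$ with $i(x,c_i)>0$, the whole punctured group $\langle T_{c_i}\rangle\setminus\{1\}$ towards the single point $p_i=\big[i(c_i,c_1):\dots:0:\dots:i(c_i,c_d)\big]$ (with a $0$ in the $i$-th slot), the speed being governed by $|n|\,i(c_i,c_j)$. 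I would take $X_i$ to be the preimage of a small $\varepsilon$-ball around $p_i$. For $\varepsilon$ small and the $p_i$ distinct these are nonempty and pairwise disjoint, and every $x\in X_j$ with $j\ne i$ has $i(x,c_i)>0$; the estimate then shows, provided $N$ was chosen large relative to $\varepsilon^{-1}$, that $T_{c_i}^{\,n}(X_j)\subseteq X_i$ for every $n\ne 0$ and every $j\ne i$. The ping-pong lemma now yields $\langle T_{c_1},\dots,T_{c_d}\rangle\cong\langle T_{c_1}\rangle * \dots * \langle T_{c_d}\rangle\cong F_d$, since each factor is infinite cyclic.

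The main obstacle is the interaction between the two requirements on $\Omega$: the curves must \emph{fill} $S_g$, yet they must also be pairwise far enough apart (intersection $\ge N$) for the ping-pong regions to close up under the low powers $n=\pm1$, where the linear estimate is weakest. The cross terms are the delicate point — for $x\in X_j$ one must control $i(x,c_k)$ for the indices $k\ne i,j$ as well — and this is exactly why the $X_i$ are taken to be genuine neighborhoods of the attracting points $p_i$ rather than the cruder regions ``$i(\cdot,c_i)$ smallest.'' Making a single explicit family of fillings that simultaneously fills every $S_g$ and realizes an arbitrarily large lower bound $N$ on all pairwise intersection numbers, while keeping the curves pairwise non-homotopic and in minimal position, is the technical heart of the argument; once such a family is in hand, the ping-pong verification is routine bookkeeping with the estimate above.
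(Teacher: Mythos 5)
Your plan has a genuine gap at its quantitative core. The headline reduction --- ``build a filling with all pairwise intersections $\ge N$, then the ping-pong containments $T_{c_i}^{\,n}(X_j)\subseteq X_i$ hold once $N$ is large relative to $\varepsilon^{-1}$'' --- is false as stated: whether one twist pushes $X_j$ into the $\varepsilon$-ball around $p_i$ is governed by \emph{ratios} of intersection numbers, not by their size. Concretely, writing $V(x)=(i(x,c_1),\dots,i(x,c_d))$, for $x$ near $p_j$ the error term in the $m$-th coordinate of $V(T_{c_i}^{\pm 1}(x))$ is of order $i(x,c_m)\approx \lambda\, i(c_j,c_m)$, while the dominant term is $i(x,c_i)\,i(c_i,c_m)\approx \lambda\, i(c_i,c_j)\,i(c_i,c_m)$; so you need $i(c_j,c_m)$ to be small compared with $i(c_i,c_j)\,i(c_i,c_m)$ for every triple, and no lower bound $N$ on pairwise intersections delivers that. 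For example, with $i(c_1,c_2)=i(c_1,c_3)=N$ and $i(c_2,c_3)=N^{10}$, take $x=c_2\in X_2$: Corollary~\ref{c1} gives $i(T_{c_1}^{\pm1}(c_2),c_2)=N^2$, $i(T_{c_1}^{\pm1}(c_2),c_1)=N$, and applying Proposition~\ref{p2} to undo the twist gives $i(T_{c_1}^{\pm1}(c_2),c_3)\ge i(c_2,c_3)-i(c_1,c_2)\,i(c_1,c_3)=N^{10}-N^2$, so $T_{c_1}^{\pm1}(c_2)$ stays projectively near $p_2$ and does \emph{not} enter $X_1$. What you actually need is a comparability condition (say all pairwise intersections in $[N,CN]$ with $N\gg C/\varepsilon$), and on top of that you never fix the twisting exponents, never verify the filling property, the pairwise non-homotopy, or this comparability for your family: you explicitly defer all of it as ``the technical heart.'' So the proof is incomplete exactly where the work lies, and the sufficient condition you do state is not sufficient.

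It is worth seeing how the paper avoids all of this, because you are one observation away from it. The paper uses essentially the curve family you propose --- $\Omega=\{T_b^{-k}(a): 0\le k\le d-1\}$ for a filling pair $\{a,b\}$, with the filling property checked exactly as you would, via Proposition~\ref{p2} --- but then replaces the $\mathcal{PML}$ dynamics by algebra: since $T_{f(a)}=fT_af^{-1}$, the twists along these curves are the conjugates $T_b^{-k}T_aT_b^{k}$, all lying in $\langle T_a,T_b\rangle$, which is free of rank $2$ by Theorem~\ref{T1} (applicable because $i(a,b)=2g-2+\ell\ge 2$); and in an abstract free group $\langle f,h\rangle$ the elements $h^{-k}fh^{k}$, $0\le k\le d-1$, generate a free group of rank $d$ by an easy word-level ping-pong (\lemref{lemma1}). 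No intersection estimates beyond the filling check are needed. If you insist on your route, you must prove the comparability condition for an explicit family and redo the containments with it; the algebraic route gets the theorem with none of that machinery.
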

 \section{Preliminaries}\label{prel}
In this section, we define the notion of the Dehn twist and state some basic results that will be instrumental in proving Theorem 1.2.
\begin{definition}
Let $c$  be a simple closed curve on $S_g$. Take a closed annular neighborhood $\mathcal{N}$ of $c$ and identify it with $S^1 \times [0,1]$. Then the Dehn twist along $c$
is isotopy class of the homeomorphism  $T_{c}:S_g \to S_g $ defined by\\
\begin{equation*}
 T_{c}(x) =   \left\{
        \begin{array}{ll}
            x &  if \quad x \notin \mathcal{N} \\
            (e^{2\pi i(\theta+t)}, t) \quad if \quad & \quad x = (e^{2\pi i\theta}, t) \in \mathcal{N} \cong S^1 \times [0,1].
        \end{array}
    \right.
\end{equation*}    
\end{definition}
\begin{proposition} \label{p2} 
Let $a_1,\ldots,a_n$ be a collection of pairwise disjoint isotopy
classes of simple closed curves in a surface $S_g$, and let
$M=\prod_{i=1}^n T^{e_i}_{a_i}$.
Suppose that $e_i>0$ for all $i$ or $e_i<0$ for all $i$. If $b$ and $c$
are arbitrary isotopy classes of simple closed curves in $S_g$, then
$$  |i(M(b),c) - \sum_{i=1}^n |e_i| \cdot |i(a_i,b)| \cdot |i(a_i,c)| \leq i(b,c).$$
\end{proposition}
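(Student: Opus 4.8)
The plan is to prove the estimate by a direct geometric count of intersection points, realizing all the relevant curves simultaneously in minimal position. By symmetry it suffices to treat the case $e_i>0$ for all $i$, the all-negative case being entirely analogous since only the absolute values $|e_i|$ enter the statement. First I would fix representatives of $a_1,\dots,a_n,b,c$ that pairwise realize their geometric intersection numbers; because the $a_i$ are pairwise disjoint, one may then choose pairwise disjoint closed annular neighborhoods $\mathcal{N}_i$ of the $a_i$ so that each of $b,c$ meets $\mathcal{N}_i$ in exactly $i(a_i,b)$, respectively $i(a_i,c)$, arcs spanning the annulus from one boundary component to the other. Since the twists $T_{a_i}$ commute and $M$ is supported in $\bigcup_i \mathcal{N}_i$, the curve $M(b)$ coincides with $b$ outside $\bigcup_i \mathcal{N}_i$, while inside each $\mathcal{N}_i$ its $i(a_i,b)$ arcs wind $e_i$ times around $a_i$.

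For the \emph{upper bound} I would count the crossings of this explicit representative of $M(b)$ with $c$. Outside $\bigcup_i\mathcal{N}_i$ the representative equals $b$, so it contributes at most $i(b,c)$ points; inside each $\mathcal{N}_i$ a single arc winding $e_i$ times meets each of the $i(a_i,c)$ arcs of $c$ in $e_i$ points up to an error of at most $1$ coming from the behaviour at the two ends of the arc. Summing over the $i(a_i,b)$ winding arcs and over $i$ produces the count $\sum_i e_i\, i(a_i,b)\, i(a_i,c)$ together with a correction attributable to the crossings of $b$ with $c$ near $\partial\mathcal{N}_i$, hence globally bounded by $i(b,c)$. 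As $i(M(b),c)$ does not exceed the number of crossings of any representative, this yields $i(M(b),c)\le \sum_i e_i\, i(a_i,b)\, i(a_i,c)+i(b,c)$.

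The \emph{lower bound} is where the real work lies, and I expect it to be the main obstacle. Here I must show that the explicit representative above cannot be isotoped to remove more than $i(b,c)$ of its crossings with $c$, i.e. that it is efficient up to this additive error. The natural tool is the bigon criterion together with a passage to the annular cover $\widetilde{S}_i\to S_g$ associated with $\langle a_i\rangle$, in which the spanning arcs and their $e_i$-fold windings straighten and can be counted exactly; any bigon between $M(b)$ and $c$ then lifts to a configuration that can be charged to a crossing of $b$ with $c$. The hypothesis that all the $e_i$ have the \emph{same sign} is essential at precisely this point: it guarantees that the windings contributed by distinct annuli spiral in a consistent direction, so that crossings created in different $\mathcal{N}_i$ cannot cancel one another and the only possible losses are the $O(i(b,c))$ crossings near the annulus boundaries. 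Combining the two bounds gives $\bigl|\,i(M(b),c)-\sum_i e_i\, i(a_i,b)\, i(a_i,c)\,\bigr|\le i(b,c)$, which is the assertion; for $n=1$ this specializes to the standard single-twist estimate as in \cite{FM}.
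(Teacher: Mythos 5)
The first thing to note is that the paper does not prove this proposition at all: it is quoted from the literature, with the text saying only ``For a proof see [FM, FLP]''. So there is no in-paper argument to compare yours against; the fair benchmark is the standard proof in Farb--Margalit/FLP, which is indeed the geometric counting argument you outline. Judged on its own terms, however, your sketch has two genuine gaps, one in each bound.

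For the upper bound, the error accounting does not work as stated. You allow an error of ``at most 1'' for each pair consisting of a winding arc of $M(b)$ and an arc of $c$ inside $\mathcal{N}_i$; summed over all such pairs this gives a potential error of $\sum_i i(a_i,b)\,i(a_i,c)$, which in general vastly exceeds $i(b,c)$, and your assertion that these corrections are ``attributable to the crossings of $b$ with $c$ near $\partial\mathcal{N}_i$, hence globally bounded by $i(b,c)$'' is exactly the point that needs proof, not a consequence of the count. Note that the proposition allows no slack at all when $i(b,c)=0$: in that case $i(M(b),c)$ must equal $\sum_i |e_i|\,i(a_i,b)\,i(a_i,c)$ exactly. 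So one must show the per-pair error vanishes unless the two arcs genuinely cross, i.e.\ every unit of error must be charged injectively to a point of $b\cap c$; this requires a specific choice of representatives (say geodesics for a hyperbolic metric, with ``straight'' twist representatives in the annuli) and a computation with lifts in the universal cover of the annulus, none of which appears in the sketch. For the lower bound, you candidly flag it as ``where the real work lies'' and then do not carry it out: the claim that any bigon between the representative of $M(b)$ and $c$ ``can be charged to a crossing of $b$ with $c$'' is precisely the content of the proposition, and the remark that the same-sign hypothesis makes the windings ``spiral in a consistent direction'' is a correct intuition but not an argument. (That hypothesis genuinely enters here: with mixed signs, twisting in one annulus can cancel twisting in another and the statement becomes false, so any honest proof must use it at a specific, identifiable step of the bigon analysis.) As it stands, the proposal is a plausible plan whose two key steps are both missing.
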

For a proof see \cite{FM,FLP}. As a special case of Proposition~\ref{p2} $M= T_{a}^{k}$ and $b=c$, we have:
\begin{corollary} \label{c1}

Let a and b be arbitrary isotopy classes of essential simple closed curves in a surface, and let k be an arbitrary integer. We have
$$i(T_{a}^{k}(b),b)=\lvert k \rvert i(a,b)^2$$
\end{corollary}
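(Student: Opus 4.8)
The plan is to obtain this statement as an immediate specialization of Proposition~\ref{p2}, the only genuine input being that the geometric self-intersection number of a simple closed curve vanishes. First I would apply Proposition~\ref{p2} in the case $n=1$, taking $a_1=a$ and $e_1=k$, so that $M=T_a^k$, and setting $c=b$. When $k\neq 0$ the sign hypothesis (that $e_i>0$ for all $i$ or $e_i<0$ for all $i$) is automatic, since there is only a single exponent; the proposition therefore applies and yields
\[
\left| i(T_a^k(b),b) - |k|\cdot i(a,b)\cdot i(a,b) \right| \leq i(b,b).
\]

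The key step is then the observation that $i(b,b)=0$: since $b$ is the isotopy class of a simple (embedded) closed curve, it admits a representative with no self-intersections, so its geometric self-intersection number is zero. Substituting $i(b,b)=0$ into the displayed inequality gives
\[
\left| i(T_a^k(b),b) - |k|\, i(a,b)^2 \right| \leq 0,
\]
and hence $i(T_a^k(b),b)=|k|\, i(a,b)^2$, as claimed.

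Finally I would dispose of the excluded case $k=0$ separately: here $T_a^0$ is the identity, so $i(T_a^0(b),b)=i(b,b)=0=|0|\cdot i(a,b)^2$, and the formula holds trivially. I do not expect any real obstacle in this argument, since essentially all of the content is already packaged into Proposition~\ref{p2}; the only point that genuinely requires attention is the vanishing of the self-intersection term $i(b,b)$, which is precisely what collapses the two-sided estimate of the proposition into the exact equality asserted by the corollary.
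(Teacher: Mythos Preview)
Your proposal is correct and follows exactly the approach indicated in the paper, which simply notes that the corollary is the special case $M=T_a^k$ and $c=b$ of Proposition~\ref{p2}. You have in fact supplied more detail than the paper itself, spelling out why $i(b,b)=0$ forces the inequality to become an equality and treating the $k=0$ case separately.
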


\textbf{Criterion for filling of a surface } A collection $\Omega=\{c_1, \cdots, c_d \}$ of simple closed curves in a surface $S_g$ is a filling if for every essential simple closed curve $c$  at least one of $i(c,c_i)$ is non-zero.

\textbf{Basic facts about Dehn Twists}
\begin{enumerate}

\item For any $f \in \text{Mod}(S_g)$ and any isotopy class $a$ of simple closed curves in $S$, we have:\\
$$T_{f(a)} = fT_a f^{-1}.$$

\item \textbf{Power of Dehn Twists } For any $f \in \text{Mod}(S_g)$ and any isotopy class $a$ of simple closed curves in $S$, we have:\\
$$fT^{j}_{a} f^{-1} =T^{j}_{f(a)}.$$
\end{enumerate}
The Ping pong argument was first used by Klein \cite{KF}. We will use the following version of ping pong lemma for $n \geq$ 2 players.
\begin{lemma}(\textbf{{Ping pong lemma}}) \label{ppl} 
Let G be a group acting on a set $X$. Let $g_1,... ,g_n$ be elements of $G$. Suppose that there are nonempty, disjoint subsets $X_1,... ,X_n$ of $X$ with the property that, for each $i$ and each $j \neq i$, we have $g^{k}_{i}(X_j ) \subseteq X_i$
for every nonzero integer $k$. Then the group generated by the $\langle g_1,... ,g_n\rangle$ is a free group of rank n.
\end{lemma}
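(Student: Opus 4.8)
The plan is to show that the canonical surjection $\phi\colon F_n \to \langle g_1,\dots,g_n\rangle$ sending the $i$-th free generator to $g_i$ is an isomorphism, which amounts to proving that $\phi$ is injective. Since every nontrivial element of $F_n$ is represented by a nonempty reduced word, it is enough to show that for every reduced word
$$ w = g_{i_1}^{k_1} g_{i_2}^{k_2} \cdots g_{i_m}^{k_m}, \qquad m \geq 1,\ k_\ell \neq 0,\ i_\ell \neq i_{\ell+1}, $$
the element $w$ acts nontrivially on $X$, and in particular $w \neq 1$ in $G$. The whole argument rests on a single ping-pong computation played out along the reduced word.

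The core computation I would isolate first is the following. Let $x \in X_j$ for some index $j \neq i_m$, and apply the letters of $w$ to $x$ from right to left. The rightmost letter gives $g_{i_m}^{k_m}(x) \in X_{i_m}$ by the hypothesis $g_{i_m}^{k_m}(X_j)\subseteq X_{i_m}$ (valid because $j \neq i_m$ and $k_m \neq 0$). Since $i_{m-1}\neq i_m$, the next letter sends $X_{i_m}$ into $X_{i_{m-1}}$, and inductively each successive letter $g_{i_\ell}^{k_\ell}$ maps $X_{i_{\ell+1}}$ into $X_{i_\ell}$ because $i_\ell \neq i_{\ell+1}$ and $k_\ell \neq 0$. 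Hence $w(x) \in X_{i_1}$. This mechanism converts the combinatorial non-cancellation of the word into a geometric statement about the nested subsets.

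With this in hand I would split into two cases. If $i_1 = i_m$, I choose an index $j$ with $j \neq i_1$, possible since $n \geq 2$, and a point $x \in X_j$, which exists because each $X_j$ is nonempty. The computation applies, as $j \neq i_m = i_1$, and gives $w(x) \in X_{i_1}$; since $X_{i_1}$ and $X_j$ are disjoint, $w(x) \neq x$, so $w \neq 1$. If instead $i_1 \neq i_m$, I reduce to the previous case by conjugation: choosing a nonzero integer $k$ with $k + k_1 \neq 0$, the conjugate $g_{i_1}^{k}\, w\, g_{i_1}^{-k}$ is again a reduced word, now of the form $g_{i_1}^{k+k_1} g_{i_2}^{k_2}\cdots g_{i_m}^{k_m} g_{i_1}^{-k}$, whose first and last indices both equal $i_1$ (the junction $g_{i_m}^{k_m} g_{i_1}^{-k}$ is genuinely reduced precisely because $i_m \neq i_1$). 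The first case shows this conjugate is nontrivial, and since conjugation preserves nontriviality, $w \neq 1$.

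The step I expect to be the main obstacle, or at least the only genuinely delicate point, is the case $i_1 \neq i_m$, where a naive choice of starting point can be forced to lie in $X_{i_1}$ itself (this is unavoidable when $n = 2$), so that landing back in $X_{i_1}$ yields no contradiction. The conjugation trick is exactly what circumvents this, and writing it out requires checking that the conjugated word stays reduced and nonempty. Throughout, the hypotheses $n \geq 2$, the nonemptiness of the $X_i$, and their pairwise disjointness each enter in an essential way, and I would flag precisely where each is invoked.
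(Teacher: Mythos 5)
Your proof is correct and complete: the right-to-left ping-pong computation, the case split on whether $i_1 = i_m$, and the conjugation trick (with the check that $k \neq 0$ and $k + k_1 \neq 0$ keep the conjugated word reduced) together handle every nonempty reduced word, and you correctly flag where $n \geq 2$, nonemptiness, and disjointness are used. The paper itself gives no proof, deferring to \cite[Chapter 3]{FM}, and your argument is essentially the standard one found there, so there is nothing to compare beyond noting the agreement.
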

For a proof see \cite[Chapter 3]{FM}.

\section{Main Theorem}
In order to prove Theorem 1.2, we need the following lemma:
\begin{lemma} \label{lemma1}
If $F_2$ is a free group of rank $2$ generated by $f$ and $h$, then $G=\left\langle  h^{-i} f h^{i}; \quad 0 \leq i \leq d-1\right\rangle$ is a free group of rank $d$.
\end{lemma}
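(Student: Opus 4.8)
The plan is to exhibit the claimed isomorphism by showing that the tautological surjection from an abstract free group of rank $d$ onto $G$ is injective; the only input needed is that $f$ and $h$ freely generate $F_2$, so that two words in $f,h$ represent the same element precisely when they have the same reduced form. Write $g_i = h^{-i} f h^{i}$ for $0 \le i \le d-1$, and let $\Phi \colon F(x_0,\dots,x_{d-1}) \to G$ be the homomorphism from the free group on the letters $x_0,\dots,x_{d-1}$ sending $x_i \mapsto g_i$. Since the $g_i$ generate $G$, the map $\Phi$ is onto, so it remains to prove that it is one-to-one.

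The key step is a substitution-and-reduction computation. Take any nontrivial reduced word $w = x_{i_1}^{e_1} x_{i_2}^{e_2}\cdots x_{i_k}^{e_k}$, so that $e_j \neq 0$ for all $j$ and $i_j \neq i_{j+1}$ for $1 \le j \le k-1$. Using $\big(h^{-i} f h^{i}\big)^{e} = h^{-i} f^{e} h^{i}$, substitute $g_{i_j} = h^{-i_j} f h^{i_j}$ and cancel the adjacent powers of $h$ telescopically to obtain
\[
\Phi(w) = h^{-i_1} f^{e_1} h^{\,i_1-i_2} f^{e_2} h^{\,i_2-i_3} \cdots h^{\,i_{k-1}-i_k} f^{e_k} h^{\,i_k}.
\]
Because consecutive letters of $w$ carry different indices, each interior exponent $i_j - i_{j+1}$ is nonzero, while each $e_j$ is nonzero by hypothesis; hence the displayed expression is an alternating product of nonzero powers of $f$ and nonzero powers of $h$, apart from the two boundary factors $h^{-i_1}$ and $h^{i_k}$, which may be trivial.

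I would then observe that this expression is already reduced as a word in $F_2 = \langle f, h\rangle$: the interior powers of $h$ are nonzero and separate successive $f$-blocks, so no two $f$-blocks and no two $h$-blocks are ever adjacent, and the possibly-present boundary powers $h^{-i_1}, h^{i_k}$ abut $f$-blocks rather than other $h$-blocks. Since $k \ge 1$, the factor $f^{e_1}$ survives, so $\Phi(w)$ is a nonempty reduced word and therefore $\Phi(w) \ne 1$ in $F_2$. Thus $\Phi$ is injective, giving $G \cong F_d$. Conceptually, the same fact is explained by noting that the $g_i$ are among the free Schreier generators of the kernel of the homomorphism $F_2 \to \mathbb{Z}$, $f \mapsto 0,\ h \mapsto 1$, relative to the transversal $\{h^{n} : n \in \mathbb{Z}\}$. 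The main, though still routine, obstacle is the careful bookkeeping of exponents and of the two boundary terms needed to guarantee that reducedness of $w$ forces reducedness of $\Phi(w)$; once that is settled, the conclusion is immediate.
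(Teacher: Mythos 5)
Your proof is correct, but it takes a genuinely different route from the paper's. The paper proves this lemma by ping pong: it lets $F_2$ act on itself by left multiplication, takes $X_i$ to be the set of reduced words beginning with $h^{-i}f$ or $h^{-i}f^{-1}$, checks that $g_i^k(X_j) \subseteq X_i$ for $i \neq j$ and $k \neq 0$, and then invokes the ping pong lemma (Lemma~\ref{ppl}) to conclude freeness. You instead verify injectivity of the tautological surjection $F(x_0,\dots,x_{d-1}) \to G$ directly: substituting $g_{i_j} = h^{-i_j} f h^{i_j}$ into a reduced word and telescoping the powers of $h$ yields an alternating product whose interior $h$-exponents $i_j - i_{j+1}$ are nonzero, so the image is a nonempty reduced word in $f,h$ and hence nontrivial because $f,h$ freely generate $F_2$. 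The two arguments share the same computational core --- the telescoping of conjugates and the observation that $i_j \neq i_{j+1}$ blocks cancellation --- but yours is self-contained, using only the definition of a free group, whereas the paper's outsources the bookkeeping to a standard black-box lemma and therefore only has to check a one-step inclusion $g_i^k(X_j)\subseteq X_i$ rather than track an arbitrary word from start to finish. Your closing remark is also a real bonus: identifying the $g_i$ as members of the Schreier basis $\{h^{-n} f h^{n} : n \in \mathbb{Z}\}$ for the kernel of the map $F_2 \to \mathbb{Z}$ sending $f \mapsto 0$, $h \mapsto 1$ explains conceptually why the result holds and shows more, namely that the conjugates over all $n \in \mathbb{Z}$ freely generate a free group of infinite rank, which is stronger than what the lemma requires.
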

\begin{proof}
Let $g_i=h^{-i} f h^{i}$ for $0 \leq i \leq d-1$ be elements of $G$. With the Ping-pong lemma in mind, we define $X_i$ as follows:\\
$X_n$= set of all reduced word starting with $h^{-n}f$ or $h^{-n}f^{-1}$ for $1\leq n \leq d$. Then it is easy to see that $X_n$ are mutually disjoint. Consider the action of $F_2$ on itself by left multiplication. Let $(h^{-j}f^{\pm 1}) \cdot w \in X_j$ . Let 
For $i\neq$j and $k\neq0$, we can see that $(h^{-i} f h^{i})^{k}(h^{-j}f^{\pm 1}) \cdot w=h^{-i} f^{k} h^{i}h^{-j}f^{\pm 1} \cdot w
=h^{-i}f^{k}\cdot w^{'} $ where $w^{'}= h^{i-j}f^{\pm 1} \cdot w$. By definition of $X_i$ we have $h^{-i}f^{k}\cdot w^{'} \in X_i$
which implies $g^{k}_{i}(X_j ) \subseteq X_i $. It follows from Lemma~\ref{ppl} that $G$ is a free group of rank $d$.
\end{proof}
We are now ready to prove Theorem 1.2.
\begin{proof}
Let $S_g$ be a closed oriented surface of genus $g\geq 2$  and $\{a,b \}$ is a filling pair on $S_g$.\\
\textbf{Claim:} $\Omega= \{a, {T^{-1}_{b}(a)}, {T^{-2}_{b}(a)}, \cdots {T^{-(d-1)}_{b}(a)} \}$ is a filling of $S_g$ such that $ G=<T_{a}, T_{T^{-1}_{b}(a)}, T_{T^{-2}_{b}(a)} \cdots T_{T^{-(d-1)}_{b}(a)} > $ is a free group of rank $d$. We will first prove that $\Omega$ is filling of $S_g$. Let $c$ be an arbitrary essential simple closed curve in $S_g$. In order to prove that $\Omega$ is a filling of $S_g$ we need to show that at least one of $i(c, T^{-k}_{b}(a)), 0 \leq k \leq d-1$ is non-zero.
If $i(a,c)\neq 0$ then there is nothing to prove. Now suppose $i(a,c)=0$. In this case we will show that $i(c, T^{-k}_{b}(a))\neq 0 \text{ for } 1 \leq k \leq d-1$. In the setting of Proposition ~\ref{p2} by taking $M= T^{-k}_{b}, b=a, e_{i}=-k $, we get:
 $$ |i(T^{-k}_{b}(a),c) -  |-k| \cdot |i(b,a)| \cdot |i(b,c)| \leq i(a,c)=0$$ which implies  $i(T^{-k}_{b}(a),c)  = ki(b,a) \cdot i(b,c)$. It is easy to see that $i(b,a)\cdot i(b,c)$ is non-zero as $\{a, b\}$ is filling and $i(a,c)=0$.   Hence $i(T^{-k}_{b},c)\neq 0$.\\
Hence $\Omega= \{a, {T^{-1}_{b}(a)}, {T^{-2}_{b}(a)}, \cdots {T^{-(d-1)}_{b}(a)} \}$ is a filling of $S_g$. \\
It remains to prove that group generated by Dehn twists along $\Omega$ is a free group of rank $d$.
As $T_{f^{-1}(a)}= f^{-1}T_af $, then 
 $G=\left\langle  T_{b}^{-i} T_{a} T_{b}^{i}; \quad 0 \leq i \leq d-1\right\rangle$.
Suppose the filling pair $\{a,b\}$ has $\ell$ boundary components. The Euler's characteristic equation implies $i(a,b)=2g-2+\ell \geq 2$. It follows Theorem~\ref{T1} that the group generated by $T_a$ and $T_b$ is isomorphic to the free group of rank 2. Then it follows from lemma~\ref{lemma1} that $G=\left\langle  T_{b}^{-i} T_{a} T_{b}^{i}; \quad 0 \leq i \leq d-1\right\rangle$ is isomorphic to the free group of rank $d$.

\end{proof}
Finally, we will count the number of boundary components of filling constructed in the proof of Theorem~\ref{main}.

\begin{proposition}
Let $S_g$ be a closed oriented surface of genus $g\geq 2$  and $\{a,b \}$ is a filling pair on $S_g$. The number of boundary components of the surface $S_g$ for the filling $\Omega=  \{a, {T^{-1}_{b}(a)}, {T^{-2}_{b}(a)}, \cdots {T^{-(d-1)}_{b}(a)} \}$ is $i(a,b)^{2}  \frac{d(d-1)(d+1)}{6}-2g+2$.
\end{proposition}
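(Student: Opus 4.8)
The plan is to compute the number $\ell$ of complementary discs directly from the Euler characteristic of the cell decomposition of $S_g$ induced by $\Omega$. First I would realize the $d$ curves $a_k := T_b^{-k}(a)$, $0\le k\le d-1$, simultaneously in general position, so that all intersections are transverse double points and no three of the curves pass through a common point. The union $\bigcup_k a_k$ then becomes a $4$-valent graph embedded in $S_g$ whose vertices are the intersection points, whose edges are the arcs cut off between consecutive intersection points, and whose faces are the complementary regions. Since $\Omega$ is a filling (already established in the proof of \thmref{main}), every face is a disc, so the number of faces equals $\ell$. Four-valency gives $E=2V$, and the Euler relation $V-E+F=2-2g$ collapses to
$$\ell = V - 2g + 2 .$$

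Next I would count the vertices. Since each vertex is the transverse intersection of exactly two of the curves, $V=\sum_{0\le j<k\le d-1} i(a_j,a_k)$. Because geometric intersection number is invariant under the homeomorphism $T_b^{\,j}$, applying $T_b^{\,j}$ to both arguments gives $i\bigl(T_b^{-j}(a),T_b^{-k}(a)\bigr)=i\bigl(a,T_b^{-(k-j)}(a)\bigr)$. Writing $m=k-j\ge 1$ and applying \corref{c1} with twisting curve $b$, fixed curve $a$, and power $-m$, I obtain $i\bigl(a,T_b^{-m}(a)\bigr)=m\,i(a,b)^2$. Hence $i(a_j,a_k)=(k-j)\,i(a,b)^2$ for all $j<k$.

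It then remains to evaluate the arithmetic sum. Grouping the pairs $(j,k)$ by their common difference $m=k-j$, for which there are exactly $d-m$ admissible choices, yields $V=i(a,b)^2\sum_{m=1}^{d-1} m(d-m)$. The standard formulas for $\sum m$ and $\sum m^2$ reduce $\sum_{m=1}^{d-1} m(d-m)$ to $\tfrac{d(d-1)(d+1)}{6}$, so that $V=i(a,b)^2\,\tfrac{d(d-1)(d+1)}{6}$. Substituting into $\ell=V-2g+2$ gives the claimed count. (As a sanity check, for $d=2$ the filling is $\{a,T_b^{-1}(a)\}$, with $V=i(a,b)^2$, consistent with the formula.)

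The only delicate point is the very first step: the Euler-characteristic count is valid only if the $d$ curves can be realized \emph{simultaneously} in pairwise minimal position with no triple intersections, so that the number of vertices is exactly $\sum_{j<k} i(a_j,a_k)$ rather than something smaller. I would handle this by fixing a hyperbolic metric on $S_g$ and replacing each $a_k$ by its geodesic representative; distinct simple closed geodesics meet transversally and minimally, and a generic choice of metric (or a small perturbation) removes triple points without changing any pairwise intersection number or the filling property. This general-position argument is the main obstacle; the remaining steps are routine bookkeeping.
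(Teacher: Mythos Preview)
Your argument is correct and follows essentially the same route as the paper: both proofs reduce the count of complementary discs to the Euler characteristic identity $\ell=\sum_{j<k} i(a_j,a_k)-2g+2$, compute each pairwise intersection via \corref{c1} together with the $\Mod(S_g)$-invariance $i(T_b^{-j}(a),T_b^{-k}(a))=i(a,T_b^{-(k-j)}(a))$, and then sum the resulting arithmetic progression to $\tfrac{d(d-1)(d+1)}{6}\,i(a,b)^2$. The only differences are cosmetic: you group the sum by the difference $m=k-j$ while the paper groups by the first index, and you spell out the $4$-valent graph derivation of the Euler relation and the general-position/triple-point issue, which the paper leaves implicit.
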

\begin{proof}
Let $a=c_1, {T^{-1}_{(b)}(a)}=c_2, \cdots ,{T^{-(d-1)}_{(b)}(a)}=c_d$, so $\Omega=\{ c_1, c_2, \cdots, c_d\}$. Euler's characteristic equation gives $\sum_{i\neq j}i(c_i, c_j)=2g-2+\ell$, here $\ell$ denotes the number of boundary components.\\
Then it follows from corollary~\ref{c1} $i(T_{a}^{k}(b),b)=\lvert k \rvert i(a,b)^2 $,
$$i(a,T_{b}^{-1}(a))=i(a,b)^2,$$
$$i(a,T_{b}^{-2}(a))=\lvert -2 \rvert i(a,b)^2 \implies i(a,T_{b}^{-2}(a))=2i(a,b)^2 $$ 
\hspace{2.5cm}$\vdots $ \hspace{2.5cm}$\vdots $\\
$$i(a,T_{b}^{-(d-1)}(a))=\lvert -(d-1) \rvert i(a,b)^2 \implies i(a,T_{b}^{-(d-1)}(a))=(d-1)i(a,b)^2. $$
 $$ \text{Similarly} \quad i(T_{b}^{-1}(a),T_{b}^{-2}(a))=i(a,T_{b}^{-1}(a)) =i(a,b)^2,$$
$$i(T_{b}^{-1}(a),T_{b}^{-3}(a))=i(a,T_{b}^{-2}(a)) =2i(a,b)^2,$$
\hspace{3.4cm}$\vdots $ \hspace{3.4cm}$\vdots $\\
$$i(T_{b}^{-1}(a),T_{b}^{-(d-1)}(a))=i(a,T_{b}^{-(d-2)}(a)) =(d-2)i(a,b)^2.$$
$$ \text{Similarly} \quad i(T_{b}^{-2}(a),T_{b}^{-3}(a))=i(a,b)^2,$$
$$i(T_{b}^{-2}(a),T_{b}^{-4}(a))=2i(a,b)^2,$$
\hspace{4.4cm}$\vdots $ \hspace{3.8cm}$\vdots $\\
$$i(T_{b}^{-2}(a),T_{b}^{-(d-1)}(a))=(d-3)i(a,b)^2.$$
\hspace{4.4cm}$\vdots $ \hspace{3.8cm}$\vdots $

$$i(T_{b}^{-(d-2)}(a),T_{b}^{-(d-1)}(a))=i(a,T_{b}^{-1}(a))= i(a,b)^2.$$
Then $\sum_{i\neq j}i(c_i, c_j)=(i(a,b)^2(1+2+ \cdots +(d-1)+i(a,b)^2(1+2+ \cdots +\cdots (d-2))+i(a,b)^2(1+2\cdots +(d-3))+ \cdots \cdots + i(a,b)^2(1)).$\\ $\sum_{i\neq j}i(c_i, c_j)=(i(a,b)^2)(1+(1+2)+(1+2+3)+ \cdots + (1+2+ \cdots + (d-1))$,
$\sum_{i\neq j}i(c_i, c_j)=(i(a,b)^2)\cdot \frac{d(d-1)(d+1)}{6}$, then it follows from Euler's characteristic equation $2g-2+\ell=i(a,b)^2 \frac{d(d-1)(d+1)}{6} \quad \text{that} \quad \ell=i(a,b)^{2} \frac{d(d-1)(d+1)}{6}-2g+2.$

\end{proof}

\section*{Acknowledgements}
The author would like to thank Council of Scientific and Industrial Research (CSIR)( 09/1290(0002)2020-EMR-I ) for providing financial support.

\bibliographystyle{plain}      
\bibliography{Reference.bib}  
\end{document}